\newtheorem{theorem}{Theorem}
\newtheorem{lemma}[theorem]{Lemma}
\newtheorem{cor}[theorem]{Corollary}
\newcommand{\EE}{\mathbb{E}}
\newcommand{\EEE}{\mathbf{E}}
\newcommand{\dx}{{\rm d}}
\newcommand{\dd}{\mathrm{d}}
\DeclareMathOperator{\area}{Area}
\title{On random disc-polygons in a disc-polygon}
\author{Ferenc Fodor} 
\address{Department of Geometry, Bolyai Institute, University of Szeged, 
Aradi v\'ertan\'uk tere 1, H-6720 Szeged, Hungary} 
\email{fodorf@math.u-szeged.hu}
\author{P\'eter Kevei}
\address{ 
Department of Stochastics, Bolyai Institute, University of Szeged, Aradi v\'ertan\'uk tere 1, H-6720 Szeged, Hungary}
\email{kevei@math.u-szeged.hu}
\author{Viktor V\'{\i}gh}
\address{Department of Geometry, Bolyai Institute, 
University of Szeged, Aradi v\'ertan\'uk tere 1, H-6720 Szeged, Hungary}
\email{vigvik@math.u-szeged.hu}
\begin{document}
\maketitle

\begin{abstract}
We prove asymptotic formulas for the expectation of the vertex number and missed area of uniform random disc-polygons in convex disc-polygons. Our statements are the $r$-convex analogues of the classical results of R\'enyi and Sulanke \cite{RS64} about random polygons in convex polygons.  
\end{abstract}

\section{Introduction and results}
Let $K$ be a convex body (compact convex set with non-empty interior) in $d$-dimensional Euclidean space $\EEE^d$, and let $X_n=\{x_1,\ldots, x_n\}$ be independent random points from $K$ chosen according to the uniform probability distribution (the Lebesgue measure in $K$ normalised by the volume of $K$). The convex hull $K_n^*=[X_n]$ of $X_n$ is a (uniform) random polytope in $K$. The behaviour of the geometric properties of $K_n^*$ have been investigated extensively. In particular, the study of the asymptotic properties of $K_n^*$ started when, in the plane, R\'enyi and Sulanke \cites{RS63, RS64} determined the behaviour of the expectations of the vertex number of $K_n^*$ and the $\area(K\setminus K_n^*)$ missed by $K_n^*$, as $n\to\infty$ in the case when $K$ is convex polygon or a sufficiently smooth disc. For a detailed overview of known results about this classical model we refer to the surveys by B\'ar\'any \cite{B08}, Reitzner \cite{R03},  Schneider \cite{Sch17}, and the references therein.

In this paper we work in the Euclidean plane $\EEE^2$ and consider a modification of the classical probability model of random polygons in which we use intersections of congruent circles to generate an analogue of the classical convex hull.  

Let $B$ denote the origin centred unit ball of $\EEE^2$, and let $S^1=\partial B$ be its boundary. 
For a fixed $r>0$, an $r$-disc-polygon is a compact convex set in $\EEE^2$ that is bounded by a finite number of radius $r$ circular arcs. Let $X\subset \EEE^2$ be a finite point set that is contained in a closed circle of radius $r$. The intersection of all radius $r$ closed circular discs that contains $X$, denoted by $[X]_r$, is an $r$-disc-polygon. The vertices and edges of a disc-polygon are defined in the natural way. It is known, see, for example, \cite{BLNP07} that if $P$ is an $r$-disc-polygon and $X\subset P$
, then $[X]_r\subset P$. Furthermore, 
for each boundary point $x\in\partial P$, there exists a point $v\in\EEE^2$ such that $x\in rS^1+v$ and $P\subset rB+v$. We call such $rB+v$ a supporting disc of $P$. Note that if $x$ is a vertex of $P$, then there are infinitely many vectors $v$ with this property, therefore, in this case the supporting disc is not unique.

Let $P$ be an $r$-disc-polygon in $\EEE^2$, and let $X_n=\{x_1, x_2, \ldots, x_n\}$ be a sample of $n$ independent 
random points in $P$ chosen according to the uniform probability distribution. 
The closed $r$-hull $P_n^r=[X_n]_r$ is a {\em uniform random $r$-disc-polygon} in $P$. 

Let $f_0 (\cdot)$ be the number of vertices of a convex (disc-)polygon, and let $\area(\cdot)$. 
In \cite{RS64} R\'enyi and Sulanke proved that if $P$ is a (classical) convex polygon, then 
\begin{align}\label{RS-polygon}
\lim_{n\to\infty}\frac{\EE f_0 (P_n^*)}{\ln n}&=\frac 23f_0 (P).
\end{align}
In fact, their formula is more precise than \eqref{RS-polygon} but we state it here in this simpler form as it fits the following discussion better.
It is a natural question: what is the asymptotics of $\EE f_0 (P_n^r)$ if $P$ is a $r$-disc-polygon? 
Our main result is the following theorem that answers this question:
\begin{theorem}\label{main}
If $P$ is a convex $r$-disc-polygon, then
\begin{equation}\label{main:vert}
\lim_{n\to \infty} \frac{\EE f_0 (P_n^r)}{\ln n}=\frac{2}{3} f_0(P)
\end{equation}
and
\begin{equation}\label{main:area}
\lim_{n\to \infty} \frac{n\EE \area(P\setminus P_n^r)}{\ln n}=\frac{2}{3}f_0(P)\area(P).
\end{equation}
\end{theorem}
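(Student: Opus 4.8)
The plan is to deduce both statements from a single asymptotic formula for the expected missed area,
\[
\EE\area(P\setminus P_n^r)=\frac{2}{3}f_0(P)\area(P)\frac{\ln n}{n}\,(1+o(1)),
\]
and then to pass to the vertex number by an Efron-type identity. Since taking the $r$-hull is a closure operation, a sample point $x_i$ is a vertex of $[X_n]_r$ exactly when $x_i\notin[X_n\setminus\{x_i\}]_r$; summing these indicator probabilities and using exchangeability gives $\EE f_0(P_n^r)=\frac{n}{\area(P)}\,\EE\area(P\setminus P_{n-1}^r)$. Granting the area asymptotics with $n$ replaced by $n-1$, this identity immediately produces \eqref{main:vert}, so the whole theorem rests on the area estimate and I would organise the argument around it.

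For the area I would use Fubini to write $\EE\area(P\setminus P_n^r)=\int_P\pr(x\notin P_n^r)\,\dd x$. The event $\{x\notin P_n^r\}$ means that some radius-$r$ disc $D$ contains $X_n$ but not $x$; equivalently, the cap $P\setminus D$ containing $x$ is free of sample points. Writing $a(x)$ for the infimum of $\area(P\setminus D)$ over radius-$r$ discs $D$ with $x\notin D$, the most economical such cap controls the probability, and one expects $\pr(x\notin P_n^r)=e^{-n a(x)/\area(P)}(1+o(1))$ to leading order, the lower bound coming from a single economic cap and the matching upper bound from a standard covering of the relevant cap family by finitely many caps. The task thus reduces to analysing $\int_P e^{-n a(x)/\area(P)}\,\dd x$, which concentrates on a boundary layer of depth of order $\ln n/n$.

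The structural observation that drives everything is that a radius-$r$ circular arc is the $r$-convex analogue of a straight segment: it realises the minimal admissible curvature $1/r$, so along the relative interior of an edge the boundary of $P$ is \emph{flat} in the $r$-convex sense, exactly as the straight edges of an ordinary polygon are flat. Consequently the integral localises near the vertices of $P$. I would show that the contribution of a fixed neighbourhood of the interior of each edge arc is of order $o(\ln n/n)$, while each vertex contributes the same amount. In a neighbourhood of a vertex $v$ I would introduce local coordinates adapted to the two incident arcs, compute $a(x)$, and evaluate the resulting integral; the logarithm arises from a logarithmically divergent integral over the depth variable, cut off at scale $1/n$ from below and a constant from above. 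At the scale where the integral concentrates, both the bounding arcs of $P$ and the cutting circle of radius $r$ are flat to leading order, so the local picture is the same wedge-cut-by-lines configuration as in the classical corner analysis of R\'enyi and Sulanke. This is why the coefficient $\tfrac23$ emerges and is universal, independent of the interior angle at $v$ and of $r$, with the local geometry affecting only discarded lower-order terms. Summing over the $f_0(P)$ vertices gives the factor $f_0(P)$, and tracking the normalisation supplies the factor $\area(P)$.

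The main obstacle I anticipate is twofold, and concerns precisely the features distinguishing the $r$-convex model from the polygonal one. First, one must understand the geometry of the economic radius-$r$ cap $a(x)$ near a vertex and show that the curvature corrections to the flat wedge model are genuinely of lower order, so that they perturb only the constant term and not the coefficient $\tfrac23$; this needs quantitative control of how a radius-$r$ disc deviates from its tangent half-plane at the relevant scale. Second, one must prove rigorously that the arc interiors contribute only $o(\ln n/n)$ — the analogue of the statement that straight edges are negligible — which, because the arcs are genuinely curved (albeit with the critical curvature $1/r$), requires a careful comparison estimate rather than the verbatim polygonal argument. Once the local vertex computation and this edge bound are in place, the remaining steps, namely the sandwiching of $\pr(x\notin P_n^r)$ and the bookkeeping of error terms, are routine.
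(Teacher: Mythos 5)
Your reduction of \eqref{main:vert} to the missed-area asymptotics is legitimate: the identity $\EE f_0(P_n^r)=\frac{n}{\area(P)}\EE\area\left(P\setminus P_{n-1}^r\right)$ is exactly the $r$-convex Efron identity, and it can be read in either direction (the paper reads it in the opposite one, proving the vertex count first). The genuine gap is the sandwich $\pr\left(x\notin P_n^r\right)=e^{-na(x)/\area(P)}(1+o(1))$, which is the engine of your whole argument and which is false --- already in the classical polygonal situation your scheme is modelled on. The event $\{x\notin P_n^r\}$ is a union, over a continuum of caps containing $x$, of the events that a given cap is empty; the most economical cap gives only a lower bound, and the union is larger by a factor that does not tend to $1$. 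You can see this without any $r$-convexity: let $K=[0,1]^2$ and let $x=(x_1,x_2)$ lie near the origin, so that the minimal cap (a triangle) cut off by a line and containing $x$ has area $a(x)=2x_1x_2$. Then
\begin{equation*}
\int_0^{1/2}\!\!\int_0^{1/2} e^{-2nx_1x_2}\,\dd x_1\,\dd x_2\sim\frac{\ln n}{2n},
\end{equation*}
so your heuristic predicts $\EE\area(K\setminus K_n^*)\sim\frac12 f_0(K)\area(K)\frac{\ln n}{n}$, whereas the true R\'enyi--Sulanke constant is $\frac23$, cf.\ \eqref{RS-polygon} combined with Efron's identity. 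The missing factor $\frac43$ is exactly the contribution of the many nearly optimal caps in different directions; no pointwise estimate of the form $e^{-na(x)/\area(P)}(1+o(1))$ can hold uniformly, since integrating it contradicts the known answer. So, carried out with correct bounds, your plan yields the right order $\ln n/n$ but a strictly smaller constant from below and only an order bound from above.

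A second, related problem is the tool you describe as ``a standard covering of the relevant cap family by finitely many caps''. This is the Economic Cap Covering theorem of B\'ar\'any and Larman, and the paper points out explicitly that its $r$-convex analogue is not known, even in the plane; moreover, even in the classical setting cap coverings give bounds that are sharp in order only, never in the constant. To obtain the constant $\frac23$ one must integrate over the whole family of caps rather than estimate pointwise non-containment probabilities. That is what the paper does: it counts edges via pairs of points, $\EE f_0(P_n)=\binom{n}{2}W_n$ with $W_n$ the integral \eqref{dabjuen}, reparametrises a pair by the disc-cap it spans, $(x_1,x_2)=\Phi(u,t,u_1,u_2)$ with Jacobian \eqref{jacobi}, discards the contributions where the cap normal points away from the vertices (using Lemma~\ref{Vaztszerell}, Lemma~\ref{lemma:1stasmyptotics}, Lemma~\ref{lemma:A-area} and Corollary~\ref{uniformnagy}), and extracts the constant from the explicit corner integral \eqref{eq:I2-asy}. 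Your structural intuitions --- localisation at the vertices, negligibility of the edge arcs, flatness of radius-$r$ arcs in the $r$-convex sense, and the Efron reduction --- are all correct and agree with the paper, but they are the easy part; the sharp constant is exactly what your pointwise-exponential step loses.
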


The quantity $\area (P\setminus P_n^r)$
is often called the missed area of $P$, and the 
limit formula \eqref{main:area} follows from \eqref{main:vert} by the $r$-convex analogue of Efron's identity, cf. \cite{FKV14}. Subsequently, we will prove \eqref{main:vert} in detail.

We would like to point out that our argument is very different from the one used by R\'enyi and Sulanke in \cite{RS64}, where affine invariance played a key role in the proof of \eqref{RS-polygon}. This is not an option in our case as the model is not invariant under affine transformations. Therefore, in order to evaluate \eqref{dabjuen}, one needs to use techniques that are more essentially based on the geometric properties of the model. This extra geometric information is described in Section 2 and it mainly concerns the behaviour of small disc-caps which determines how to divide the domain of integration in \eqref{dabjuen}. 

It is a natural question to ask how Theorem~\ref{main} is related to the corresponding classical result \eqref{RS-polygon} of R\'enyi and Sulanke \cite{RS63}. 
Our method can also be used, with some modifications, to prove \eqref{RS-polygon}. However, whether \eqref{main:vert} implies \eqref{RS-polygon} in the limit as $r\to\infty$ is unclear.

We call a compact convex set $K\subset {\bf E}^2$ $R$-convex (the terms $R$-spindle convex and $R$-hyperconvex are also used in the literature), if it is the intersection of all radius $R$ closed circular discs that contain $K$. This condition is known to be equivalent to the property that $K$ slides freely in a circle of radius $R$, that is, for any $x\in RS^1$ there exists a vector $p\in{\mathbb E}^2$ with $x\in K+p\subset RB$. The concept of $R$-convexity goes back, at least, to Mayer \cite{M1935}, and it has been investigated recently quite intensively. The importance of $R$-convexity comes, in part, from its connection to various old problems in which intersections of congruent balls appear, like the Kneser-Poulsen conjecture. For more information on the properties of $R$-convex sets we refer to \cite{BLNP07}, \cite{FKV14} and the references therein.

Our probability model has a natural modification for $R$-convex discs. If $K$ is an $R$-convex disc for some $R\leq r$, and $X_n=\{x_1, \ldots, x_n\}$ are independent random points chosen from $K$ according to the uniform probability distribution, then it is known that the random $r$-disc-polygon  $K_n^r=[X_n]_r$ is contained in $K$, see \cite{FKV14}. 
The asymptotic behaviour of the expectations $\EE f_0 (K_n^r)$ and $\EE \area(K\setminus K_n^r)$ have been determined by Fodor, Kevei and V\'\i gh in \cite{FKV14} in the case when $K$ is a convex disc such that it boundary $\partial K$ is $C^2_+$ and $r>1/\kappa_m$, where $\kappa_m=\min_{x\in\partial K}\kappa (x)>0$ and $\kappa (x)$ denotes the curvature of $\partial K$ at $x$. It is known that under these conditions $K$ is $R$-convex for $R\ge 1/\kappa_m$, see \cite{Sch14}*{Theorem~3.2.12 on p. 164}. The following statements were proved in \cite{FKV14}:

\begin{align}
\lim_{n\to\infty}\EE f_0(K_n^r)\cdot n^{1/3}&=\sqrt[3]{\frac{2}{3\area (K)}}\Gamma\left (\frac 53\right )c(K,r),\label{vertex-asympt}\\
\lim_{n\to\infty}\EE \area(K\setminus K_n^r)\cdot n^{2/3}&=\sqrt[3]{\frac{2\area(K)^2}{3}}\Gamma\left (\frac 53\right )c(K,r),\label{missed-area-asympt}
\end{align}
where 
$$c(K,r)=\int_{\partial K}\left (\kappa (x)-\frac 1r\right )^{1/3}\, \dx x.$$
The symbol $\Gamma(\cdot)$ denotes Euler's gamma function, and integration on $\partial K$ is with respect to arc-length.

The formulas \eqref{vertex-asympt} and \eqref{missed-area-asympt} are generalisations of the corresponding classical results of R\'enyi and Sulanke from \cite{RS63} in the sense that the asymptotic formulas of R\'enyi and Sulanke follow from \eqref{vertex-asympt} and \eqref{missed-area-asympt} in the limit as $r\to\infty$, see Section~3 of \cite{FKV14} for details.

Finally, we conjecture that for any $r$-convex disc $K\subset \EEE^2$ different from $rB^2$ the following inequalities hold for any $n$
\begin{align}\label{ineq:lower-upper}
c_1(K)\log n<\EE f_0(K_n^r)<c_2(K)n^{1/3},
\end{align}
for suitable constants $c_1(K)$ and $c_2(K)$, and that the orders in \eqref{ineq:lower-upper} are optimal: the left-hand inequality of is realised by $r$-disc-polygons and the right-hand inequality by smooth $r$-convex discs. 
We note that, due to the different behaviour of $rB^2$, it has to be excluded from the inequality \eqref{ineq:lower-upper}, cf.~Theorem~1.3 in \cite{FKV14}.  

The corresponding inequalities in the classical convex case for the number $f_k(\cdot)$ of $k$-dimensional faces were established using floating bodies and the Economic Cap Covering Theorem by B\'ar\'any and Larman \cite{BL88} and by B\'ar\'any \cite{B89}: for any convex body $K \subset \EEE^d$ it holds that
\begin{align}
C_1(d)(\log n)^{d-1}<\EE f_k(K_n^*)<C_2(d)n^{\frac{d-1}{d+1}}
\end{align}
for suitable constants $C_1(d)$ and $C_2(d)$ and any $n$. Here the left-hand inequality is of right order for polytopes and the right-hand one for smooth convex bodies. 

Unfortunately, the analogue of the Economic Cap Covering Theorem is not known for the $r$-convex case, even in the plane. We conjecture that it is true, however, the methods used in its proof do not seem to translate to the $r$-convex setting.

\section{Caps of disc-polygons}
As both \eqref{main:vert} and \eqref{main:area} are invariant under simultaneous scaling of $K$ and the generating circles of $P_n^r$, we may and do assume from now on that $r=1$ and omit $r$ from the notation. Accordingly, we use the $[X]_S$ symbol for the $1$-hull of the set $X$.
In particular, the $1$-hull of two points $x,y\in\EEE^2$, with $|x-y|\leq 2$ is denoted by $[x,y]_S$ and is called the {\em spindle} of $x$ and $y$. Subsequently, a disc-polygon always means a convex $1$-disc-polygon.   

Let $P$ be a disc-polygon and let $B^\circ$ denote the origin centred unit radius open circular disc.
A subset $D$ of $P$ is a {\em disc-cap of $P$} if 
$D=P\setminus (B^\circ+p)$ for some point $p\in\EEE^2$.
Note that in this case $\partial B+p$ intersects 
$\partial P$ in at most two points, and $D$ contains at least one vertex of $P$. The
boundary of a nonempty disc-cap $D$ consists
of at most two connected arcs: one arc is a subset of $\partial P$, and
the other arc is a subset of $\partial B+p$.   

For $x\in \partial P$, let $\mathcal N(x)\subset S^1$ denote the set of all outer unit normal vectors of $P$ at $x$. If $x\in\partial P$ is not a vertex of $P$, then $\mathcal N(x)=\{u_x\}$ contains a single element. If $x$ is a vertex, then $\mathcal N(x)$ determines a closed and connected arc of $S^1$. 

\begin{lemma}\label{vertexclaim}
Let $P$ be a disc-polygon.
Let $D=P\setminus (B^\circ +p)$ be a non-empty disc-cap of $P$ with non-empty interior.  Then there exists a unique
unit vector $u$ and a number $t> 0$ such that $B+p=B+x_0-(1+t) u$, where $x_0$ in the unique point on $\partial P$ with $u\in \mathcal N (x_0)$.
\end{lemma}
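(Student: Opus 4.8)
The plan is to single out the geometrically natural candidate for the pair $(u,t)$ and then verify the two required properties. First I would let $x_0$ be a point of $P$ at maximal distance from $p$; such a point exists by compactness of $P$ and necessarily lies on $\partial P$. Put $R=|x_0-p|$, $u=(x_0-p)/R$, and $t=R-1$. Since the disc-cap $D=P\setminus(B^\circ+p)$ has non-empty interior, $P$ must contain points strictly outside the closed unit disc $B+p$, which forces $R>1$ and hence $t>0$; this is precisely where the hypothesis on $D$ is used. With these choices $p=x_0-Ru=x_0-(1+t)u$, so the asserted identity $B+p=B+x_0-(1+t)u$ holds as soon as we check that $u\in\mathcal N(x_0)$.

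To see that $u$ is an outer normal at $x_0$ I would use that $x_0$ is a farthest point: every point of $P$ lies in the closed disc $RB+p$, and the tangent line to the circle $RS^1+p$ at $x_0$ has outer normal $u$. As $P$ lies on the inner side of this line while $x_0\in P$ lies on it, the line supports $P$ at $x_0$, giving $u\in\mathcal N(x_0)$. That $x_0$ is the \emph{unique} boundary point with $u\in\mathcal N(x_0)$ then follows from strict convexity of disc-polygons (their boundary contains no segment, so each direction is the outer normal at exactly one point). This settles existence.

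For uniqueness I would show that any admissible triple $(u,x_0,t)$ forces $x_0$ to be the unique farthest point of $P$ from $p$, which in turn pins down $u$ and $t$. Here I would invoke the supporting-disc property recalled above: for $u\in\mathcal N(x_0)$ the unit disc $B+v$ with $v=x_0-u$ contains $P$. Since then $p=v-tu$, a triangle-inequality estimate gives, for every $x\in P$,
\[
|x-p|=|(x-v)+tu|\le |x-v|+t\le 1+t,
\]
with equality only when $x-v=u$, that is $x=x_0$. Hence $x_0$ is the unique maximiser of $|\,\cdot-p\,|$ on $P$, attained at distance $1+t$. Consequently two admissible triples must share the same $x_0$ (the unique farthest point), hence the same $t=|x_0-p|-1$ and the same $u=(x_0-p)/(1+t)$, which is the desired uniqueness.

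The hard part is this uniqueness half. Farthest points of a general convex body need not be unique, so the argument genuinely relies on the stronger sandwiching by a \emph{unit} supporting disc rather than merely by a supporting half-plane. Verifying that this disc actually contains $P$ for every $u\in\mathcal N(x_0)$ — including at a vertex, where $\mathcal N(x_0)$ is a whole arc of directions — is the one step where the specific geometry of disc-polygons, as opposed to that of arbitrary convex bodies, is essential.
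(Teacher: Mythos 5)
Your proof is correct and follows the same strategy as the paper's: take $x_0$ to be a point of $P$ farthest from $p$, obtain $u\in\mathcal N(x_0)$ from the tangent line at $x_0$ to the circle of radius $|x_0-p|$ about $p$, and prove uniqueness by showing that any admissible representation forces $x_0$ to be the farthest point of $P$ from $p$, via the containment $P\subset B+x_0-u$ for $u\in\mathcal N(x_0)$. The one real difference is where uniqueness of the farthest point comes from. The paper establishes it up front by spindle convexity: if $x_0\neq x_1$ were both at maximal distance from $p$, then $[x_0,x_1]_S\subset P$, and the midpoint of one of the two unit arcs joining $x_0$ and $x_1$ would lie strictly farther from $p$, a contradiction. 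You instead extract uniqueness from the equality analysis in $|x-p|=|(x-v)+tu|\le|x-v|+t\le 1+t$, which makes the separate spindle argument unnecessary --- a modest but genuine streamlining (and you are also more explicit than the paper about why $t>0$, using the non-empty interior of $D$). Finally, note that both your proof and the paper's rest on the same unproved ingredient: that $P\subset B+x-u$ for \emph{every} $u\in\mathcal N(x)$, including all normal directions in the arc $\mathcal N(v)$ at a vertex $v$. This is stronger than the supporting-disc statement recalled in the paper's introduction (which asserts only the existence of some supporting disc at each boundary point), and the paper invokes it without proof exactly as you do, in the sentence ``then $B+x-u$ supports $P$ at $x$''; it is a known fact about spindle convex sets, see \cite{BLNP07}. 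So the step you flag at the end as the hard part is indeed the crux, but your treatment of it is precisely as complete as the paper's own.
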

We call $u$ the outer unit {\em normal}, $x_0$ the {\em vertex}, and $t$ the {\em height of $D$}.
Lemma~\ref{vertexclaim} was proved in \cite{FKV14} for the $C^2_+$ case, and in higher dimension in \cite{F19} also for the $C^1$ case. Essentially the same argument works here too but for the sake of completeness we provide a short proof.

\begin{proof}
Let $x_0$ be a point of $P$ whose distance from $p$ is maximal. First we show that $x_0$ is unique. Assume on the contrary that $x_1\neq x_0$ are both at maximal distance from $p$. Then the spindle $[x_0,x_1]_S$ is also in $P$, and one of the midpoints of the unit circular arcs connecting $x_0$ and $x_1$ is farther from $p$ than $x_0$, a contradiction.

Let $u=(x_0-p)/|x_0-p|\in S^1$. The line through $x_0$ that is perpendicular to $u$ clearly supports $P$ at $x_0$ hence $u\in \mathcal N(x_0)$. Thus, $B+p=B+x_0-(1+t) u$ for some $t> 0$.

On the other hand, if $B+p=B+x-(1+t) u$ for some $x\in \partial P$, $u\in \mathcal N(x)$ and $t> 0$, then $B+x-u$ supports $P$ at $x$, and $(1+t)B+p$ also supports $P$ at $x$. This yields that $x$ is the farthest point of $P$ from $p$, and the uniqueness of $x_0$ and $u$ follows.
\end{proof}

Let $D(u,t)$ denote the disc-cap with normal $u$ and
height $t$. (Due to the strict convexity of $P$, $u$ determines $x_0$ uniquely.) Note that for each $u\in S^1$, there exists 
a maximal positive constant $t^*(u)$ such that $(B+x_u-(1+t)u)\cap P\neq\emptyset$
for all $t\in [0, t^*(u)]$.  Here $x_u$ is the unique point in $\partial P$ with $u\in \mathcal N(x_u)$. 
Let $A(u,t)=\area(D(u,t))$ and let $\ell(u,t)$ denote 
the arc-length of $\partial D(u,t)\cap (\partial B+x_u-(1+t)u)$.

We recall the following notations from \cite{FKV14}. 
Let $x$ and $y$ be two points from $P$. 
The two unit circles passing through $x$ and
$y$ determine two disc-caps of $P$, which we denote by
$D_-(x,y)$ and $D_+(x,y)$, respectively, such that $\area(D_-(x,y))\leq \area(D_+(x,y))$.
For brevity of notation, we write $A_-(x,y)=\area(D_-(x,y))$
and $A_+(x,y)=\area(D_+(x,y))$ and simply $A=\area (P)$. 

\begin{lemma}\label{vannagysapka}
Let $P$ be a disc-polygon with at least three vertices. Then there exists a constant $\delta_0>0$, depending only on $P$, such that
$A_+(x_1, x_2)> \delta_0$ for any two distinct points $x_1, x_2\in P$. 
\end{lemma}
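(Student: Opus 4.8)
The plan is to bound $A_+(x_1,x_2)$ from below by comparing the two disc-caps cut off by the circles through $x_1$ and $x_2$ with the spindle $[x_1,x_2]_S$, and then to turn a pointwise strict inequality into a uniform positive gap by a compactness argument that is precisely where the hypothesis $f_0(P)\geq 3$ is used. First I would record two elementary facts. Since $P$ is a disc-polygon, it is the intersection of the closed unit discs containing it, so $P$ lies in a unit disc and thus $\mathrm{diam}(P)\leq 2$; in particular $|x_1-x_2|\leq 2$, the spindle $[x_1,x_2]_S$ is defined, and because $x_1,x_2\in P$ we have $[x_1,x_2]_S\subseteq P$. Second, let $B^\circ+p_1$ and $B^\circ+p_2$ be the two open unit discs whose bounding circles pass through both $x_1$ and $x_2$, so that $D_-(x_1,x_2)=P\setminus(B^\circ+p_1)$ and $D_+(x_1,x_2)=P\setminus(B^\circ+p_2)$ in some labelling. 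Since the spindle is exactly the intersection of the two closed unit discs $B+p_1$ and $B+p_2$, the union of the caps is
\[
D_-(x_1,x_2)\cup D_+(x_1,x_2)=P\setminus\bigl((B^\circ+p_1)\cap(B^\circ+p_2)\bigr)=P\setminus\mathrm{int}\,[x_1,x_2]_S .
\]

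From these facts I get $A_-(x_1,x_2)+A_+(x_1,x_2)\geq \area\bigl(D_-\cup D_+\bigr)=A-\area([x_1,x_2]_S)$, and since $A_+\geq A_-$ this yields
\[
A_+(x_1,x_2)\geq \tfrac12\bigl(A-\area([x_1,x_2]_S)\bigr).
\]
Hence it suffices to prove that $\sup_{x_1,x_2\in P}\area([x_1,x_2]_S)<A$. The area of a spindle depends only on the distance $|x_1-x_2|$ and is a continuous, strictly increasing function of it, so the supremum is attained at a pair $(a,b)$ realising the diameter of $P$ (which exists by compactness of $P\times P$), and equals $\area([a,b]_S)$.

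This is the step where the hypothesis enters and also the main obstacle, namely upgrading the pointwise inclusion $[x_1,x_2]_S\subseteq P$ to a \emph{uniform} strict area gap. The spindle $[a,b]_S$ has exactly two vertices, whereas $P$ has at least three, so $[a,b]_S$ is a proper closed convex subset of the convex body $P$; consequently there is a point of $\mathrm{int}(P)$ outside the closed set $[a,b]_S$, and a small disc around it witnesses $\area([a,b]_S)<A$. Setting $\delta_0=\tfrac12\bigl(A-\area([a,b]_S)\bigr)>0$ then gives $A_+(x_1,x_2)>\delta_0$ for all distinct $x_1,x_2$, with $\delta_0$ depending only on $P$. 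One degenerate case must be dispatched: if $|a-b|=2$ the spindle degenerates to a full unit disc, and $[a,b]_S\subseteq P\subseteq(\text{unit disc})$ with equal radii would force $P$ to equal that disc, contradicting $f_0(P)\geq 3$; thus in fact $\mathrm{diam}(P)<2$ and no degeneracy occurs.
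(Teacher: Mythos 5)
Your proof is correct, and it rests on the same basic decomposition as the paper's: the paper's argument is exactly the identity $P = D_-(x_1,x_2)\cup D_+(x_1,x_2)\cup [x_1,x_2]_S$ combined with a uniform lower bound on $\area(P\setminus [x_1,x_2]_S)$. Where you differ is in how that uniform bound is obtained. The paper notes only that $[x_1,x_2]_S$ cannot cover $P$ (since $P$ is not a spindle) and then invokes compactness of $P\times P$; this tacitly uses continuity of $(x_1,x_2)\mapsto \area([x_1,x_2]_S)$, which is left unstated. You instead exploit the fact that the spindle area is a continuous, strictly increasing function of $|x_1-x_2|$ alone, which collapses the uniformity question to a single extremal configuration, a diametral pair $(a,b)$; there you need only the strict inequality $\area([a,b]_S)<A$, which you get from proper inclusion of compact convex sets, and you correctly dispose of the degenerate case $|a-b|=2$ (where the spindle would be a full unit disc and $P$ would be forced to coincide with it, contradicting $f_0(P)\geq 3$). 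Your route is more explicit and self-contained, at the cost of the monotonicity fact and the degeneracy discussion; the paper's is shorter but hides the continuity argument in the word ``compactness.'' One small blemish: with your choice $\delta_0=\tfrac12\bigl(A-\area([a,b]_S)\bigr)$ you only get $A_+(x_1,x_2)\geq \delta_0$ at diametral pairs themselves (equality could conceivably occur there if $A_-=A_+$ and the two caps overlap in measure zero), whereas the lemma asserts a strict inequality; this is fixed by taking, say, $\delta_0/2$ instead — precisely the slack the paper builds in by demanding $\area(P\setminus[x_1,x_2]_S)>2\delta_0$ rather than $\geq 2\delta_0$.
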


\begin{proof} 
We note that $[x_1, x_2]_S$ cannot cover $P$ because $P$ is not a spindle. 
Thus, by compactness, there exists a constant $\delta_0>0$, depending only
on $P$, such that $\area(P\setminus [x_1,x_2]_S)> 2\delta_0$ for any two distinct points
$x_1, x_2\in P$. Now, the statement of the lemma follows from the fact that 
$P=D_-(x_1, x_2)\cup D_+(x_1, x_2)\cup [x_1, x_2]_S$.
\end{proof}

Note that the statement of Lemma~\ref{vannagysapka} does not hold if $P$ has only two vertices, that is, if it is a spindle $P=[v_1,v_2]_S$.
\begin{lemma}\label{2vertices}
Let $P=[v_1,v_2]_S$ be a disc-polygon with two vertices. Then there exists constants $c=c(P)$ and $\delta=\delta(P)$ such that if $x_1, x_2\in P$ with $A_-(x_1,x_2)\le A_+(x_1,x_2)<\delta$ then
$$|\overset{\LARGE\frown}{x_1x_2}|>c,$$
where $|\overset{\LARGE\frown}{x_1x_2}|$ denotes the arc-length of the shorter unit circular arc joining $x_1$ and $x_2$.
\end{lemma}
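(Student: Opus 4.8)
The plan is to use the special feature of the two-vertex case: since $P=[v_1,v_2]_S$ is itself a spindle, the area of the sub-spindle $[x_1,x_2]_S$ cut out by two points depends only on their distance, and I will show that the smallness of both caps forces this sub-spindle to fill up almost all of $P$, which in turn forces $x_1$ and $x_2$ to lie far apart. The reduction ``small caps $\Rightarrow$ large spindle area $\Rightarrow$ long arc'' is the conceptual core.

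First I would record the area estimate coming from the same decomposition used in the proof of Lemma~\ref{vannagysapka}: the two unit circles through $x_1$ and $x_2$ split $P$ as $P=[x_1,x_2]_S\cup D_-(x_1,x_2)\cup D_+(x_1,x_2)$, where the sub-spindle (the intersection of the two generating discs) and the union of the two caps (its complement in $P$) have disjoint interiors; here I use that $[x_1,x_2]_S\subset P$ by the spindle-convexity of $P$. Hence $A_-(x_1,x_2)+A_+(x_1,x_2)\ge \area(P)-\area([x_1,x_2]_S)$. Under the hypothesis $A_-\le A_+<\delta$ the left-hand side is $<2\delta$, so $\area([x_1,x_2]_S)>\area(P)-2\delta$.

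Next I would exploit that $\area([x_1,x_2]_S)$ is a strictly increasing continuous function $g$ of the chord length $d=|x_1-x_2|$ alone: the spindle of two points at distance $d$ is determined up to a rigid motion, and the explicit formula $g(d)=\theta-\sin\theta$ with $\theta=2\arcsin(d/2)$ gives $g'(d)=(1-\cos\theta)/\sqrt{1-d^2/4}>0$ on $(0,2)$. Since $P=[v_1,v_2]_S$, we also have $\area(P)=g(d_0)$ with $d_0=|v_1-v_2|\in(0,2)$. Choosing $\delta=\tfrac12\bigl(g(d_0)-g(d_0/2)\bigr)>0$, the previous inequality becomes $g(d)>g(d_0/2)$, and monotonicity of $g$ yields $d>d_0/2$. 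As the shorter unit arc joining $x_1$ and $x_2$ has length $2\arcsin(d/2)$, it exceeds $c:=2\arcsin(d_0/4)>0$, and both $c$ and $\delta$ depend only on $P$ through $d_0$.

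The only genuinely geometric ingredient is the inequality $A_-+A_+\ge\area(P)-\area([x_1,x_2]_S)$, i.e.\ that the two caps together with the sub-spindle cover $P$; everything afterwards is the elementary monotonicity of $g$. I expect the only points needing care to be the degenerate configurations — confirming $d_0<2$ (a spindle with two genuine vertices is not the whole disc) and that $\max\{|x_1-x_2|:x_1,x_2\in P\}=d_0$, so that $g$ is always evaluated in its strictly monotone range — but these do not affect the conclusion, which merely requires $g(d_0)-2\delta>0$.
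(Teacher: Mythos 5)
Your proof takes essentially the same route as the paper's: the identical decomposition $P=[x_1,x_2]_S\cup D_-(x_1,x_2)\cup D_+(x_1,x_2)$ giving $\area([x_1,x_2]_S)>\area([v_1,v_2]_S)-2\delta$, which is the entirety of the paper's displayed argument. The only difference is that you make explicit the concluding step (the paper's ``and the assertion follows'') by introducing the strictly increasing spindle-area function $g(d)=\theta-\sin\theta$ with $\theta=2\arcsin(d/2)$ and choosing $\delta$ accordingly; this is correct and fills in exactly what the paper leaves to the reader.
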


\begin{proof}
Similarly as before, 
\[
\area{[x_1,x_2]_S}\ge \area{[v_1,v_2]_S}-A_-(x_1,x_2)- A_+(x_1,x_2)>\area{[v_1,v_2]_S}-2\delta,
\]
and the assertion follows.
\end{proof}

\begin{lemma}\label{Vaztszerell}
Let $P$ be a disc-polygon and assume that the cap $D(u,t)$ is so small that $A(u,t)\le \delta$. Then 
$$\frac{ t \ell(u,t)}{2\pi}<A(u,t) <2 t \ell(u,t).$$ 
\end{lemma}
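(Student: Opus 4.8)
The plan is to pass to polar coordinates centred at the apex-point $p$ and to read off both inequalities from the radial picture of the cap. Write $D=D(u,t)=P\setminus(B^\circ+p)$ with $p=x_0-(1+t)u$, and for a direction $\theta\in S^1$ let $\rho(\theta)$ be the distance from $p$ to the farthest boundary point of $P$ in that direction. By Lemma~\ref{vertexclaim} the point $x_0$ is the unique farthest point of $P$ from $p$, so $\rho(\theta)\le 1+t$ for every $\theta$, with equality for $\theta$ in the direction of $u$. Since $\partial B+p$ meets $\partial P$ exactly in the two endpoints $y_1,y_2$ of the cutting arc, the directions in which $P$ protrudes beyond $\partial B+p$ fill precisely an interval $[\theta_1,\theta_2]$, where $y_i$ lies in direction $\theta_i$; consequently $\ell=\theta_2-\theta_1$. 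With this notation $D$ is contained in the annular sector
\begin{equation*}
\Sigma=\{\,z:\ 1\le |z-p|\le 1+t,\ \arg(z-p)\in[\theta_1,\theta_2]\,\},
\end{equation*}
because every point of $D$ lies at distance between $1$ and $1+t$ from $p$ and in one of these directions.

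For the upper bound I would simply use this containment: $\area(\Sigma)=\tfrac12\big((1+t)^2-1\big)\ell=(t+\tfrac{t^2}{2})\ell$, so $A(u,t)\le(t+\tfrac{t^2}{2})\ell$. As $A(u,\cdot)$ is increasing and vanishes at $t=0$, the hypothesis $A(u,t)\le\delta$ forces $t$ to be small; choosing $\delta$ small enough to guarantee $t<2$ turns the last estimate into $A(u,t)<2t\ell$, the upper inequality of the lemma.

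For the lower bound I would inscribe, rather than circumscribe, a sector. Put $M=\{\theta\in[\theta_1,\theta_2]:\rho(\theta)\ge 1+\tfrac t2\}$. For each $\theta\in M$ the radial segment from distance $1$ to distance $1+\tfrac t2$ lies in $D$, so $D$ contains the annular sector swept over $M$ and
\begin{equation*}
A(u,t)\ge\tfrac12\big((1+\tfrac t2)^2-1\big)\,|M|>\tfrac t2\,|M|.
\end{equation*}
Thus the lower inequality will follow once I establish the \emph{spread estimate} $|M|\ge\ell/\pi$. (When $x_0$ is a very sharp vertex, $p$ may fall outside $P$ and this inscribed sector must be replaced by the fan spanned by $x_0$ and the cutting arc, which lies in $D$ by convexity; that case is narrow and is already covered by the estimate below.)

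The spread estimate is the heart of the matter and the step I expect to be the main obstacle. For caps whose angular width $\ell$ is bounded away from $\pi$ it follows from convexity alone: the boundary arc from $y_1$ through $x_0$ to $y_2$ lies outside its chords to the apex $x_0$, and comparing $\rho$ with these chords already makes $|M|$ a definite proportion of $\ell$. The delicate regime is that of wide, shallow caps ($\ell$ close to $\pi$), where the chord comparison degenerates because the chords plunge back to radius $1$ in the middle. Here I would exploit that $P$ is $1$-convex, so that $\partial P$ is assembled from unit arcs of curvature exactly one: a cap of angular width near $\pi$ can arise only when $\partial P$ follows a single such edge across the whole cap, in which case $D$ is the lune between two unit circles whose centres lie at distance $t$, and the estimate can be read off from the expansion $\rho(\theta)-1\approx t\sin\theta$. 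The genuine difficulty is to splice these two regimes together uniformly — equivalently, to exclude a thin but tall spike of $\rho$ smeared over a wide range of directions — and the geometric input that rules this out is precisely the curvature-one property of the edges of $P$.
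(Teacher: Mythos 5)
Your upper bound is correct and coincides with the paper's argument: $D(u,t)$ sits inside the annular sector of radii $1,1+t$ and angle $\ell$ centred at $p$, whose area is $(t+t^2/2)\ell<2t\ell$ once $\delta$ forces $t<2$. The lower bound, however, has a genuine gap, and you name it yourself: the whole proof is reduced to the unproven \emph{spread estimate} $|M|\ge \ell/\pi$ for $M=\{\theta:\rho(\theta)\ge 1+t/2\}$, which you concede is ``the main obstacle.'' Worse, the sketch you offer for the regime you call easy is quantitatively wrong. Comparing $\rho$ with the chords from the apex $x_0$ to the endpoints of the cutting arc bounds $\rho$ from below only by the radial function $h/\cos\psi$ of a straight segment, and when such a chord is nearly tangent to the cutting circle (a narrow cap whose edge normal at the apex makes angle $\approx\sqrt{2t}$ with $u$), that function exceeds $1+t/2$ on only the fraction $1-1/\sqrt{2}\approx 0.293$ of its angular extent, which is \emph{less} than $1/\pi\approx 0.318$. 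Note that here $\ell\approx\sqrt{2t}$ is tiny, so ``$\ell$ bounded away from $\pi$'' is no protection: convexity alone cannot yield the constant $1/\pi$, and indeed for straight-edged convex polygons the spread estimate with this constant is false in exactly this configuration. What rescues disc-polygons is that a curvature-one edge with that same normal bends away from the cutting circle far faster than its tangent line (its centre lies at distance $\approx\sqrt{2t}$, not $\approx 2$, from $p$), but converting this into a uniform bound over all caps is precisely the splicing step you leave open; until it is done, there is no proof of the left-hand inequality.

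The paper's proof shows that none of this machinery is needed, because it never estimates the measure of a superlevel set of $\rho$. It drops the perpendicular $f$ from the apex $x_0$ to the chord $ab$ of the cutting arc, sets $z=f\cap ab$ and $y=f\cap(\partial B+p)$, labels $a$ so that $|az|\ge |ab|/2$, and inscribes in $D$ the single region $T$ bounded by the segment $x_0y$, the arc of $\partial P$ from $a$ to $x_0$, and the arc of $\partial B+p$ from $a$ to $y$. Then $\area(T)\ge\area(ayx_0)=\tfrac12\,|x_0y|\,|az|\ge \tfrac12\, t\,\ell/\pi$, using only three facts: $|x_0y|\ge t$; the chord--arc inequality $|ab|=2\sin(\ell/2)\ge 2\ell/\pi$; and the comparison $\area(T)\ge\area(ayx_0)$, which holds because what $T$ gains over the triangle (the region between the chord $ax_0$ and $\partial P$, which contains a unit circular segment by spindle convexity) exceeds what it loses (the unit circular segment over the shorter chord $ay$), by monotonicity of segment area in chord length together with $|ax_0|>|ay|$. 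So a single inscribed region plus a chord-length comparison replaces your pointwise control of $\rho$ over a large angular set. Either adopt this construction, or prove your spread estimate with a genuinely quantitative use of the curvature-one property of the edges; as it stands, the lower bound is not established.
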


\begin{figure}[h]
    \centering
    \includegraphics[scale=.4]{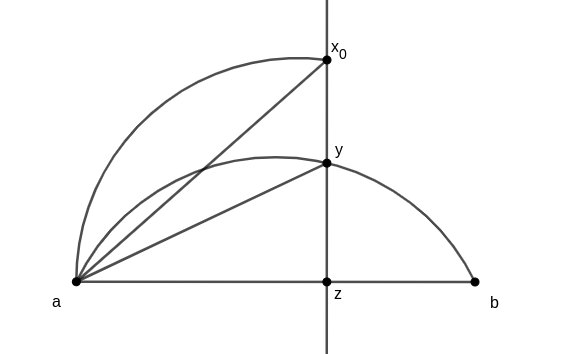}
    \caption{}
    \label{fig:bigcap}
\end{figure}

\begin{proof}
Let $x_0$ be the vertex of $D(u,t)=P\setminus (B^\circ+p)$, and assume that $\partial B+p$ intersects $\partial P$ in $a$ and $b$, consequently $\ell(u,t)>|ab|\ge 2 \ell(u,t)/\pi$. 

First we prove the lower bound. Draw a line $f$ through $x_0$ that is perpendicular to $ab$, let $z=f\cap ab$, and w.l.o.g. assume $|z-a|\ge |z-b|$. Denote by $y$ the intersection point of $f$ and $\partial B+p$, see Figure~\ref{fig:bigcap}. Note that $|y-x_0|\ge t$, and $|x_0-a|>|y-a|$.

Consider the domain $T$ bounded by the segment $x_0y$, the shorter circular arc joining $a$ and $x_0$, and the short circular arc joining $a$ and $y$, as on Figure~\ref{fig:bigcap}. Clearly, $T\subset D$. As $|x_0-a|>|y-a|$, it follows that the area of $T$ is larger then the area of the triangle $ayx_0$. Since $|az|\ge \ell(u,t)/\pi$ and $|x_0y|\ge t$, we have that $\area(ayx_0)\ge t\ell(u,t)/(2\pi)$, and the lower bound follows.

We turn to the upper bound. As the vertex is the farthest point of $P$ from $p$, it follows that $D$ is contained in an annulus of radii $1$ and $1+t$. Also, $D$ lies in the angle $apb\angle$. Hence
$$A(u,t)\le \frac{\ell(u,t)}{2\pi}\cdot ((1+t)^2-1)\pi<2t\ell(u,t),$$
which finishes the proof of the lemma.
\end{proof}

Assume that for a sufficiently small $t$ the cap $D(u,t)=P\setminus (B^\circ +p)$ contains a single vertex $v$ of $P$, and denote by $e$ end $e^*$ the two edges of $P$ that meet at $v$. Let $c$ be the centre of the unit circle that determines $e$, and $n=v-c$.  The circle $S^1+p$ intersects $S^1+c$ in $y$,  and the segment $pv$ in $z$, cf. Figure~\ref{fig:hatlestim}. Let $ \ell_1$ denote the shorter circular arc connecting $y$ and $z$, and let $\beta$ be the angle of $u$ and $n$. 

\begin{lemma} \label{lemma:1stasmyptotics}
With the notation above
\begin{equation}\label{estimatehatel}
\lim_{(t, \beta)\to (0+,0+)} \left ( \frac{\sin \ell_1 \cdot \sin \beta}{t}-\cos \ell_1 \right )= 0 
\end{equation}
\end{lemma}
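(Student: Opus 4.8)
The plan is to reduce \eqref{estimatehatel} to a single exact trigonometric identity linking $\ell_1$, $\beta$ and $t$, and then to read off the limit from elementary estimates. First I would fix convenient coordinates. By Lemma~\ref{vertexclaim} the centre satisfies $v = p + (1+t)u$, so that $v$ is the farthest point of $P$ from $p$ (for a small cap carrying the single vertex $v$, the vertex of the cap is $v$ itself). Placing $p$ at the origin with $u=(1,0)$ gives $z = p+u = (1,0)$ and $v=(1+t,0)$, and since $n=v-c$ is a unit vector enclosing angle $\beta$ with $u$, I may orient the picture so that $c=(1+t-\cos\beta,\,-\sin\beta)$. As $y$ lies on the edge $e$, hence on the opposite side of the line $pv$ from $c$ (this is read off from Figure~\ref{fig:hatlestim}), the relevant intersection point of the two circles is $y=(\cos\ell_1,\sin\ell_1)$.

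The key step is to impose that $y$ also lies on $S^1+c$, that is $|y-c|^2=1$. Writing $w=1+t-\cos\beta$ and $d^2=|c|^2=w^2+\sin^2\beta=(1+t)^2+1-2(1+t)\cos\beta$, all quadratic terms collapse and one is left with the exact identity
$$\sin\ell_1\sin\beta = w\cos\ell_1 - \tfrac12 d^2 .$$
Dividing by $t$, subtracting $\cos\ell_1$, and using $w-t=1-\cos\beta$ together with $d^2=t^2+2(1+t)(1-\cos\beta)$, this rearranges to
$$\frac{\sin\ell_1\sin\beta}{t}-\cos\ell_1 = -\frac{(1-\cos\beta)(1-\cos\ell_1)}{t} - (1-\cos\beta) - \frac t2 .$$
The last two summands tend to $0$ automatically, so the whole matter reduces to controlling the first one.

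The hard part is that the term $E:=\frac{(1-\cos\beta)(1-\cos\ell_1)}{t}$ is a priori singular: the factor $\frac{1-\cos\beta}{t}$ may blow up along paths with $\beta^2\gg t$, and in fact $\ell_1$ has no single limit as $(t,\beta)\to(0^+,0^+)$ — it ranges over $(0,\pi/2)$ according to the relative sizes of $\beta$ and $t$, so neither factor can be estimated in isolation. To control $E$ I would feed the identity back on itself. Rewriting it as
$$w(1-\cos\ell_1) + \sin\beta\sin\ell_1 = t\Bigl(\cos\beta-\tfrac t2\Bigr),$$
and noting that both summands on the left are nonnegative while $t(\cos\beta-t/2)\le w$ (indeed $w-t(\cos\beta-t/2)=(1-\cos\beta)(1+t)+t^2/2\ge 0$), one reads off $1-\cos\ell_1\le 1$ (hence $\ell_1\le\pi/2$ and $\cos\ell_1\ge 0$) and $\sin\beta\sin\ell_1\le t$. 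Since $\ell_1\le\pi/2$ gives $\sin\ell_1\ge\sqrt{1-\cos\ell_1}$, the second inequality yields the crucial bound $1-\cos\ell_1\le t^2/\sin^2\beta$.

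Substituting this into $E$ and using $\frac{1-\cos\beta}{\sin^2\beta}=\frac{1}{1+\cos\beta}$ then gives
$$0\le E \le \frac{(1-\cos\beta)\,t}{\sin^2\beta} = \frac{t}{1+\cos\beta}\le t \longrightarrow 0,$$
and combined with the previous display this forces $\frac{\sin\ell_1\sin\beta}{t}-\cos\ell_1\to 0$, proving the lemma. The only points that need care in the write-up are the justification that $x_0=v$ and that the correct intersection point is $y=(\cos\ell_1,\sin\ell_1)$ (equivalently, that $y$ and $c$ lie on opposite sides of $pv$), both of which follow from $u$ lying in the normal cone of $v$ on the side of the edge $e$.
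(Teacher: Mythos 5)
Your proof is correct and takes essentially the same approach as the paper: both rest on expanding $|y-c|^2=1$ in the same coordinates, and your identity $\sin\ell_1\sin\beta = w\cos\ell_1 - \tfrac{1}{2}d^2$ is just a rearrangement of the paper's unnumbered identity preceding \eqref{util1}. The only divergence is the finish --- the paper factors the identity multiplicatively as \eqref{util1} and deduces that $(\sin\ell_1\sin\beta)/t$ is bounded, whereas you isolate the cross term $(1-\cos\beta)(1-\cos\ell_1)/t$ additively and bound it by $t$ --- a cosmetic difference, though your version is slightly more quantitative and derives $\ell_1\le\pi/2$ from the identity rather than asserting it geometrically.
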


\begin{figure}
    \centering
    \includegraphics[scale=.3]{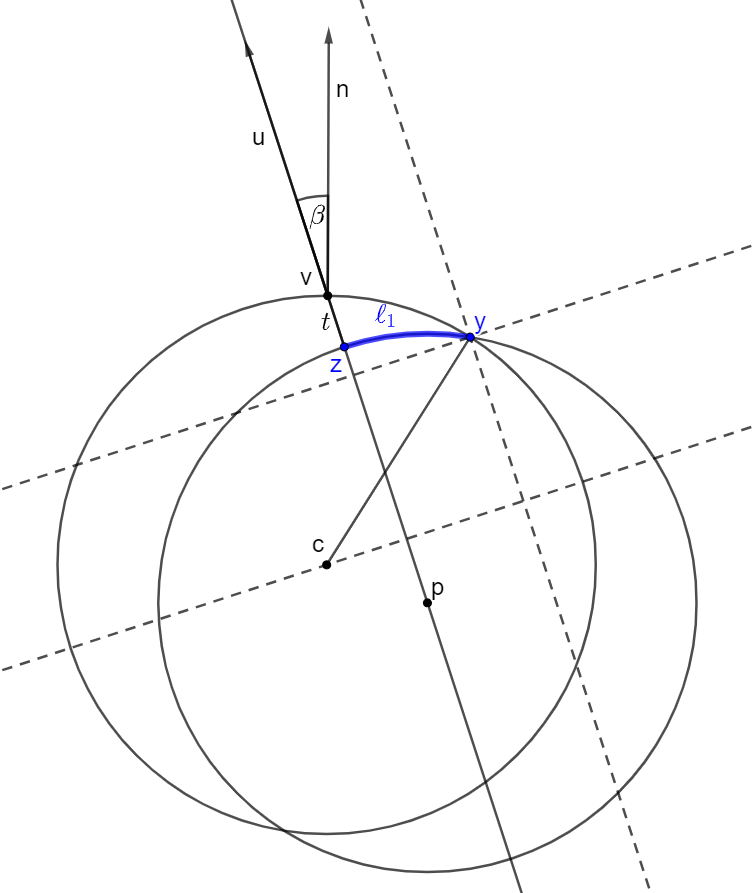}
    \caption{Computing $\ell_1$}
    \label{fig:hatlestim}
\end{figure}

\begin{proof}
We use the notations of Figure~\ref{fig:hatlestim}.

By the Pythagorean theorem $$|y-c|^2=1=(\sin \ell_1+\sin \beta)^2+(\cos \ell_1-(1+t-\cos \beta))^2.$$
After simplifying and rearranging the terms we get
$$\sin \ell_1 \sin \beta+ (\cos \ell_1 -1) (\cos \beta -1)=(\cos \beta+\cos \ell_1 -1)t -\frac {t^2}2. $$
Dividing by $t>0$ and using the $\sin ^2 x+\cos^2 x=1$ identity lead to
\begin{equation} \label{util1}\frac{\sin \ell_1 \cdot \sin \beta}{t}\left (1+ \frac{\sin \ell_1 \cdot \sin \beta}{(1+\cos \ell_1 )(1+\cos \beta)}\right )=\cos \beta +\cos \ell_1-1-\frac t2.
\end{equation}

As $\ell_1<\pi/2$ and we may clearly assume $\beta \le \pi/2$, the second factor on the left-hand-side is between $1$ and $2$, while the right-hand-side is  bounded. Hence $(\sin \ell_1 \sin \beta)/t$ is also bounded, and thus 
$$\lim_{(t, \beta)\to (0+,0+)} \sin \ell_1 \cdot \sin \beta=0,$$
which implies \eqref{estimatehatel} using (\ref{util1}).
\end{proof}

Keeping $\beta > 0$ fixed, from \eqref{util1} we obtain
\begin{equation} \label{eq:l-asy-betanagy}
\ell_1(\beta, t) \sim t \cot{\beta}, \quad \text{as } t \to 0+.
\end{equation}

Let $A_1(\beta, t)$ denote the area of the set bounded by 
the arcs $vy$ and $yz$, and the segment $vz$, see Figure \ref{fig:hatlestim}.

\begin{lemma} \label{lemma:A-area}
For any $\varepsilon > 0$ there exists $\delta > 0$ such that 
if $t \leq \delta \beta$ and $\beta < \delta$, then
\[
\frac{1 - \varepsilon}{2} t \ell_1(\beta, t) \leq A_1 (\beta, t) 
\leq \frac{1 + \varepsilon}{2} t \ell_1(\beta, t).
\]
\end{lemma}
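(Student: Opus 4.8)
The plan is to introduce explicit coordinates, realise the set of area $A_1(\beta,t)$ as a curvilinear triangle, and compare it with the rectilinear triangle on the same three corners. Place $p$ at the origin with $u$ pointing in the positive $y$-direction. Then, exactly as in the proof of Lemma~\ref{lemma:1stasmyptotics}, the relevant points are $v=(0,1+t)$, the point $z=(0,1)$ where the segment $pv$ meets $S^1+p$, the point $y=(\sin\ell_1,\cos\ell_1)$ where $S^1+p$ meets $S^1+c$, and $c=(-\sin\beta,\,1+t-\cos\beta)$; the identities $|v-c|=|y-c|=1$ reproduce the Pythagorean relation used there. The set whose area is $A_1(\beta,t)$ is then the curvilinear triangle with corners $v,y,z$, bounded by the segment $vz$, the arc $yz\subset S^1+p$ of length $\ell_1$, and the arc $vy\subset S^1+c$.

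First I would record the area of the rectilinear triangle $vzy$. Since $v$ and $z$ lie on the $y$-axis at distance $t$ apart and the horizontal distance from $y$ to that axis is $\sin\ell_1$, this area equals $\tfrac12\,t\sin\ell_1=\tfrac12\,t\,\ell_1\,(1+O(\ell_1^2))$, which is the claimed main term up to the factor $\sin\ell_1/\ell_1\to1$. The curvilinear triangle differs from the rectilinear one by exactly two circular segments: the segment cut off by the chord $yz$ (central angle $\ell_1$ on the unit circle about $p$) is subtracted, while the segment cut off by the chord $vy$ (central angle $\psi$ on the unit circle about $c$) is added, so that
\[
A_1(\beta,t)=\tfrac12\,t\sin\ell_1+\tfrac12\bigl[(\psi-\sin\psi)-(\ell_1-\sin\ell_1)\bigr].
\]

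The delicate point, and the main obstacle, is that each of the two segments has area of order $\ell_1^3$, and this is \emph{not} uniformly negligible against the main term: by \eqref{util1} one has $\sin\ell_1\sin\beta\sim t$, hence $\ell_1\sim t/\beta$ and $t\ell_1\sim\ell_1^2\beta$, so the ratio $\ell_1^3/(t\ell_1)\sim\ell_1/\beta\sim t/\beta^2$ can be of order $1$ when $\beta$ is comparable to the threshold. Thus the two segments must be shown to cancel to leading order, which forces me to compute the central angle $\psi$ of the arc $vy$ precisely rather than only to order $\ell_1$. Writing $v-c=(\sin\beta,\cos\beta)$ and reading off the angle $\alpha$ of the unit vector $y-c$ from $\sin\alpha=\sin\ell_1+\sin\beta$, the addition formula gives $2\cos\tfrac{\alpha+\beta}{2}\sin\tfrac{\psi}{2}=\sin\ell_1$ with $\psi=\alpha-\beta$; expanding to third order yields $\psi=\ell_1+O\!\bigl(\ell_1\beta(\ell_1+\beta)\bigr)$, whence $(\psi-\sin\psi)-(\ell_1-\sin\ell_1)=O\!\bigl(\ell_1^3\beta(\ell_1+\beta)\bigr)$.

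Finally I would assemble the estimate. By the last two displays the correction term has relative size $O\!\bigl(\ell_1(\ell_1+\beta)\bigr)$ against the main term $\tfrac12\,t\ell_1$, while the factor $\sin\ell_1/\ell_1$ contributes a further $O(\ell_1^2)$. In the regime $t\le\delta\beta$, $\beta<\delta$ one has $\ell_1\le C\delta$ by \eqref{util1}, so both relative errors are $O(\delta)$; choosing $\delta$ small enough in terms of $\varepsilon$ makes them below $\varepsilon$, giving the two-sided bound. The only additional care needed is uniformity: every $O(\cdot)$ above must hold uniformly on $\{t\le\delta\beta,\ \beta<\delta\}$, which is why I would carry the Taylor expansions with explicit remainders depending solely on the smallness of $\ell_1$ and $\beta$, both guaranteed by the hypotheses together with \eqref{util1}.
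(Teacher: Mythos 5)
Your proposal is correct, and its skeleton coincides with the paper's proof: the same coordinates, the same decomposition of $A_1$ into the rectilinear triangle $yvz$ plus the circular segment over the chord $vy$ minus the segment over the chord $yz$ (this is exactly \eqref{eq:A1-ter}, your $\psi$ being the paper's arc length $i$), and the same recognition that the two segments are individually non-negligible and must cancel against each other --- in fact the situation is even worse than the ``order $1$'' you describe: for $t = \delta\beta$ and $\beta \ll \delta$ one has $\ell_1 \asymp \delta$, so each segment, of size $\asymp \ell_1^3$, \emph{dominates} the main term $\tfrac12 t\ell_1 \asymp \delta^2\beta$. The genuine difference is how the near-equality of the two arc lengths is established. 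The paper needs no expansion at all: the triangle inequality in $yvz$ gives $|2\sin(i/2) - 2\sin(\ell_1/2)| \le t$, hence $|i - \ell_1| \le 2t$, and the mean value theorem for $f(x) = x - \sin x$ then yields $|f(i) - f(\ell_1)| \le i^2 t \le 4\ell_1^2 t$, i.e.\ relative error $O(\ell_1)$; see \eqref{eq:f-bound}. You instead compute $\psi = \arcsin(\sin\ell_1 + \sin\beta) - \beta$ exactly and expand; your bound $\psi - \ell_1 = O(\ell_1\beta(\ell_1+\beta)) = O(t(\ell_1 + \beta))$ is true and sharper than the paper's $O(t)$, but the step ``expanding to third order'' is the one soft spot. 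A naive Taylor expansion gives $\psi - \ell_1 = \tfrac12 \ell_1\beta(\ell_1+\beta) + O((\ell_1+\beta)^5)$, and the remainder $O((\ell_1+\beta)^5)$ is \emph{not} of the form $O(\ell_1\beta(\ell_1+\beta))$ throughout your regime: taking $t = \delta\beta$ and $\beta \ll \ell_1^3 \asymp \delta^3$ (which your constraints allow) makes the naive remainder dominate the claimed bound. To get the product form uniformly one must use that $\psi - \ell_1$ vanishes identically when $\ell_1 = 0$ or when $\beta = 0$, for instance by writing it as $\int_0^{\sin\beta} \bigl( (1 - (\sin\ell_1 + s)^2)^{-1/2} - (1 - s^2)^{-1/2} \bigr) \dd s$ and bounding the integrand by $C \ell_1 (\ell_1 + s)$; with that repair, your final assembly (relative error $O(\delta)$ from the segment difference, plus $O(\ell_1^2)$ from $\sin\ell_1/\ell_1$) goes through as stated. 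The trade-off is clear: the paper's triangle-inequality trick is shorter and immune to uniformity issues, while your explicit computation quantifies the cancellation more precisely, at the cost of the extra bookkeeping above.
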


\begin{proof}
Let $i$ denote the length of the arc $\overset{\LARGE\frown}{vy}$, and put $f(x) = x - \sin x$. 
Then $f(i)/2$ is the area of the set between the arc $\overset{\LARGE\frown}{vy}$ and the segment $vy$.
Therefore,
\begin{equation} \label{eq:A1-ter}
A_1(\beta, t) = \area(yvz) + \frac{1}{2} (f(i) - f(\ell)).
\end{equation}

We claim that 
\[
f( i ) - f(\ell_1) \leq \varepsilon t \ell_1.
\]
By the triangle inequality in $yvz$ we obtain
\[
2 \sin \frac{i}{2} - 2 \sin \frac{\ell_1}{2} \leq t.
\]
We have
\[
t \geq 
2 \sin \frac{i}{2} - 2 \sin \frac{\ell_1}{2} = ( i - \ell_1) \cos \xi 
\geq \frac{i - \ell_1}{2},
\]
where $\xi \in (\ell_1, i)$. Thus, 
$i - \ell_1 \leq 2t$. Furthermore,
\begin{equation} \label{eq:f-bound}
 f(i) - f(\ell_1) = (i - \ell_1) f'(\xi')
 \leq (i - \ell_1) \frac{i^2}{2} \leq i^2 t
 \leq 4 \ell_1^2 t,
\end{equation}
where, in the last inequality we used that $i \leq 2 \ell_1$. Since $\ell_1$ is small,
for small enough $\delta > 0$
\[
( 1 - \varepsilon / 2) \frac{t \ell_1(\beta, t)}{2} \leq 
\area(yvz) \leq ( 1 + \varepsilon / 2) \frac{t \ell_1(\beta, t)}{2},
\]
thus the result follows from \eqref{eq:A1-ter} and \eqref{eq:f-bound}.
\end{proof}

The following simple corollary adds to Lemma~\ref{lemma:1stasmyptotics}.

\begin{cor}\label{uniformnagy}
Assume that the cap $D(u,t)$ contains at least two vertices of $P$ and that $A(u,t)\le \delta_0$. Then there is a constant $c>0$ (depending only on $P$) such that for all possible $t>0$ and $u\in S^1$ we have
$$c<\ell(u,t) \qquad { and } \qquad ct<A(u,t).$$ 
\end{cor}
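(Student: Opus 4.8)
The plan is to reduce both inequalities to a single lower bound on the arc-length $\ell(u,t)$, and then to read this bound off the thin annulus into which a small cap is squeezed. First note that the second inequality follows at once from a bound on $\ell$: if $\delta_0$ is chosen small enough (depending only on $P$) that Lemma~\ref{Vaztszerell} applies, then $A(u,t) > t\,\ell(u,t)/(2\pi)$, so any estimate of the form $\ell(u,t) > c_1$ immediately yields $A(u,t) > (c_1/(2\pi))\,t$. Taking $c = c_1/(2\pi) \le c_1$ then gives both $c < \ell(u,t)$ and $c\,t < A(u,t)$ with the same constant. Thus the whole statement reduces to proving $\ell(u,t) > c_1$ for some $c_1 = c_1(P) > 0$ under the stated hypotheses.

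For this core estimate I would exploit that the two vertices are forced far apart while the cap is confined to a thin annulus. Let $v_1, v_2$ be two distinct vertices of $P$ contained in $D(u,t)$; since $P$ has finitely many vertices, $|v_1 - v_2| \ge d_{\min}$, where $d_{\min} = d_{\min}(P) > 0$ is the least distance between two vertices of $P$. By Lemma~\ref{vertexclaim} the cap vertex $x_0$ lies at distance $1+t$ from $p$, so $D(u,t)$ is contained in the annulus $\{x : 1 \le |x-p| \le 1+t\}$. Moreover, in the standing regime the cap is bounded by a single circular arc of $\partial B + p$ (between the two points $a,b$ where $\partial B+p$ meets $\partial P$) and a single arc of $\partial P$, so every point of $D(u,t)$ is seen from $p$ within the central angle of that circular arc, which is exactly $\ell(u,t)$. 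Hence $D(u,t)$ lies in an annular sector of radii $1$ and $1+t$ and opening angle $\ell(u,t)$, whose diameter is at most $(1+t)\,\ell(u,t) + t$. Consequently $d_{\min} \le |v_1 - v_2| \le (1+t)\,\ell(u,t) + t$, and as soon as $t$ is below a threshold depending only on $P$ this forces $\ell(u,t) \ge d_{\min}/4 =: c_1$.

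The step that requires the most care, and which I expect to be the main obstacle, is guaranteeing that the hypothesis $A(u,t) \le \delta_0$ really does push $t$ below that threshold, uniformly in $u$; note that the previous paragraph genuinely needs $t$ small, since for large $t$ the bound $d_{\min} \le (1+t)\ell + t$ carries no information. For fixed $u$ the area $A(u,t)$ is continuous and strictly increasing in $t$ (enlarging $t$ moves the excluded disc $B^\circ + p$ away, so $P \setminus (B^\circ+p)$ grows), so it suffices to show that for each $\varepsilon > 0$ there is $m(\varepsilon) > 0$ with $A(u,t) \ge m(\varepsilon)$ whenever $t \ge \varepsilon$, uniformly in $u \in S^1$. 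I would obtain this by compactness: the map $u \mapsto A(u,\varepsilon)$ is continuous and strictly positive on the compact circle $S^1$, hence bounded below by some $m(\varepsilon) > 0$, and monotonicity in $t$ gives $A(u,t) \ge A(u,\varepsilon) \ge m(\varepsilon)$ for all $t \ge \varepsilon$. Taking $\varepsilon$ equal to the threshold of the previous paragraph and then shrinking $\delta_0$ (depending only on $P$) below $m(\varepsilon)$ makes $A(u,t) \le \delta_0$ incompatible with $t \ge \varepsilon$, so $t$ is small as needed; shrinking $\delta_0$ further so that Lemma~\ref{Vaztszerell} also applies costs nothing, as all the constants involved depend only on $P$. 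Combining the three steps yields $\ell(u,t) > c_1$ and hence $c\,t < A(u,t)$ with $c = c_1/(2\pi)$, proving the corollary.
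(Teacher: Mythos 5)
Your proof is correct in substance, but it takes a genuinely different route from the paper's, which is much shorter: there one observes that, since at least two vertices of $P$ lie on the boundary portion of the cap, the two endpoints $a,b$ of the bounding arc of $\partial B+p$ lie on non-adjacent edges of $P$, so $\ell(u,t)>|ab|\ge c_0$, where $c_0$ is the minimal distance between points of $\partial P$ lying on non-adjacent edges; the inequality $ct<A(u,t)$ then follows from Lemma~\ref{Vaztszerell} exactly as in your reduction step. You instead separate the two vertices \emph{inside} the cap, $|v_1-v_2|\ge d_{\min}$, trap the cap in an annular sector of radii $1$ and $1+t$ with opening $\ell(u,t)$, deduce $d_{\min}\le(1+t)\ell(u,t)+t$, and are then forced to show that $t$ is small, which you obtain from the area hypothesis by a monotonicity-plus-compactness argument. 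What each buys: the paper's argument needs no control on $t$ at all, while yours is more robust --- the phrase ``non-adjacent edges'' silently assumes the cap does not wrap around $\partial P$, and for a disc-triangle (where every two edges are adjacent) the constant $c_0$ is not even well defined, whereas your vertex-to-vertex bound works verbatim when $f_0(P)=3$. Two caveats on your side. First, by shrinking $\delta_0$ you prove the corollary only for $A(u,t)\le\delta_0'$ with some $\delta_0'\le\delta_0$; to recover the stated threshold you should add the easy complementary case $\delta_0'<A(u,t)\le\delta_0$, where $t\le t_2$ gives $A(u,t)>\delta_0'\ge(\delta_0'/t_2)\,t$ and, by Lemma~\ref{Vaztszerell}, $\ell(u,t)>A(u,t)/(2t)\ge\delta_0'/(2t_2)$ (alternatively, note that the corollary is only ever applied in the paper with an adjustable threshold $\delta\le\delta_0$). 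Second, your justification of the angular containment --- that the cap is bounded by a single arc of $\partial B+p$ and a single arc of $\partial P$, hence is seen from $p$ within the central angle $\ell(u,t)$ --- is not a valid inference on its own: a cap that wraps around $\partial P$ has exactly that boundary structure yet violates the containment, and ruling this out again uses smallness of the area. This is, however, precisely the unproved assertion ``$D$ lies in the angle $apb\angle$'' in the paper's own proof of Lemma~\ref{Vaztszerell}, so on this point you are on the same footing as the paper.
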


\begin{proof}
Let $c_0=\min |pq|$ where $p$ and $q$ are two points from $\partial P$ that are not on adjacent edges of $P$. Obviously, $c_0<\ell(u,t)$. The second part of the statement follows from Lemma~\ref{Vaztszerell}.
\end{proof}

\section{Proof of Theorem \ref{main}}

We only prove \eqref{main:vert} concerning the vertex (or edge) number. The asymptotic formula \eqref{main:area} for the missed area follows directly from the Efron-type identity (5.10) in \cite{FKV14}.

First we assume that $P$ has at least $3$ vertices. Observe that the pair of random points $x_1, x_2$ determines an edge of $P_n$
if and only if at least one of the disc-caps 
$D_-(x_1,x_2)$ and $D_+(x_1, x_2)$ 
does not contain any other points from $X_n$. Thus, using the notation from \cite{RS63},
$$
\EE (f_0(P_n))= \binom{n}{2}  W_n,
$$
where
\begin{equation}\label{dabjuen}
W_n=\frac1{A^2}\int_P \int_P
\left[ \left (1-\frac{A_-(x_1, x_2)}{A} \right )^{n-2} + \left (1-\frac{A_+(x_1, x_2)}{A}
\right )^{n-2} \right] \dx x_1 \dx x_2.
\end{equation}
Note that if all points of $X_n$ fall into the closed spindle spanned by $x_1$ and $x_2$,
then $x_1$ and $x_2$ contribute two edges to $P_n$
(since in this case $[X_n]_S=[x_1,x_2]_S$), 
and accordingly, this event is counted in both terms in the integrand of (\ref{dabjuen}).

As $f_0(P)\ge 3$ is assumed, Lemma~\ref{vannagysapka} yields that 
\begin{align*}
&\lim_{n\to\infty} 
\binom{n}{2}\frac{1}{A^2}\int_P\int_P \left (1-\frac{A_+(x_1, x_2)}{A}\right )^{n-2} \dx x_1 \dx x_2\\
\leq&\lim_{n\to\infty} \binom{n}{2}\frac{1}{A^2}\int_P\int_P e^{-\frac{\delta_0}{A}(n-2)} \dx x_1 \dx x_2\\
=&\lim_{n\to\infty} \binom{n}{2} e^{-\frac{\delta_0}{A}(n-2)}=0.
\end{align*}
Thus, the contribution of the second term of (\ref{dabjuen}) 
is negligible, hence, in what follows, we will consider only the first term.
Note that a similar argument yields that in the first term of (\ref{dabjuen})
it is enough to integrate over pairs of random points $x_1, x_2$ such that
$A_-(x_1, x_2)<\delta_0$. Furthermore, the same conclusion holds for any fixed $\delta\leq \delta_0$. Let ${\mathbf 1}(\cdot)$ denote the indicator function of 
an event. Then
\begin{multline}\label{eq:restricted}
\lim_{n\to\infty} \EE(f_0(P_n))\frac 1 {\ln n}\\
=\lim_{n\to\infty} \frac 1 {\ln n} \binom{n}{2}
\frac{1}{A^2}\int_P\int_P \left (1-\frac{A_-(x_1, x_2)}{A}\right )^{n-2}
{\bf 1}(A_-(x_1, x_2)<\delta_0) \dx x_1 \dx x_2. 
\end{multline}

Now, we re-parametrise the pair $(x_1, x_2)$ as follows, see \cite{FKV14} and \cite{Santalo}. Let 
\begin{equation}\label{kulcstrafo}
(x_1, x_2)=\Phi (u, t, u_1, u_2),
\end{equation}
where $u, u_1, u_2\in S^1$ and $0\leq t\leq t_0(u)$ are chosen such that $\area D(u,t)< \delta_0$, and thus 
$$D(u,t)=D_-(x_1, x_2),$$ 
and
$$(x_1, x_2)=(x_u-(1+t)u+u_1, x_u-(1+t)u+u_2).
$$
Note that $u_1$ and $u_2$ are the unique outer unit normal vectors
of $\partial B+x_u-(1+t)u$ at $x_1$ and $x_2$, respectively. 
This yields that, for fixed $u$ and $t$, both $u_1$ and $u_2$
are in the same arc of length $\ell (u,t)$ in $S^1$. We denote
this arc by $L(u,t)$. 
Since $A_-(x_1, x_2)< \delta_0$,  
$D_-(x_1, x_2)$ is uniquely determined by Lemma~\ref{vannagysapka}.
Now, the uniqueness of the vertex and height of a disc-cap guarantees 
that $\Phi$ is well-defined, bijective, and differentiable 
on a suitable domain of $(u, t, u_1, u_2)$, cf. \cite{FKV14}.

Let $v_0, \ldots, v_{k-1}$ denote the vertices of $P$ labelled cyclically on $\partial P$ in the positive direction, and let 
$\mathcal{N}(v_i) = \overset{\LARGE\frown}{n_im_i} \subset S^1$, which is a closed arc of $S^1$. Let $r: [0,2\pi)\to S^1$ be the usual parametrisation of the unit circle, and we introduce $\alpha_i=r^{-1}(n_i)$, $\beta_i=r^{-1}(m_i)$, for an arbitrary $u\in S^{1}$ we use $\beta=r^{-1}(u)$, and for simplicity we write $D(\beta, t)=D(r(\beta), t)$, etc. accordingly.
Put 
\[
N_1 = \cup_{i=0}^{k-1} \mathcal{N}(v_i)  \subset S^1,
\quad N_2 = S^1 \backslash N_1,
\]
and 
\[
B_1 = \{ (x_1, x_2) \in P^2: \,  u \in N_1, \text{ where } D(u,t)= D(x_1, x_2) \},
\quad B_2 = P^2 \backslash B_1.
\]
The same calculation as in the Appendix of \cite{FKV14} yields that 
the Jacobian $|J\Phi|$ of $\Phi$ satisfies
\begin{equation}\label{jacobi}
|J\Phi(u,t,u_1, u_2)|=
\begin{cases} 
( 1+t )|u_1\times u_2|, & \text{if } u \in N_1, \\
t |u_1\times u_2|, & \text{if } u \in N_2.
\end{cases}
\end{equation}
We note that $|u_1\times u_2|$ equals the sine of the 
length of the unit 
circular arc between $x_1$ and $x_2$ on the boundary of $D(u,t)$.

Notice that if $u \in N_2$, then for any $t > 0$ the cap $D(u,t)$ contains 
two (or more, if $t$ is large) vertices of $P$, while if $u \in N_1$, then for 
sufficiently small $t$ (depending on $u$), the cap $D(u,t)$ contains 
exactly one vertex of $P$.

First, we show that the part of the integral in \eqref{eq:restricted} on $B_2$ is negligible. Note that the height $t_0(u)$ is uniformly bounded: $0<t_1\le t_0(u) \le t_2$. 
Using \eqref{jacobi} and Corollary \ref{uniformnagy}
we have
\[
\begin{split}
& \int \!\! \int_{B_2} 
\left (1-\frac{A_-(x_1, x_2)}{A}\right )^{n-2}
{\bf 1}(A_-(x_1, x_2)<\delta) \dx x_1 \dx x_2  \\
& = \int_{N_2} \int_0^{t_0(u)} \int_{L(u,t)} \int_{L(u,t)}
\left( 1 - \frac{A(u,t)}{A} \right)^{n-2} 
t  |u_1\times u_2| \, \dx u_1 \dx u_2 \,
\dd t \dd u \\
& = \int_{N_2} \dd u \int_0^{t_0(u)} 
\left( 1 - \frac{A(u,t)}{A} \right)^{n-2} t ( \ell(u,t) - \sin \ell(u,t) ) \dd t \\
& \leq C \int_0^{t_2} ( 1 - ct)^{n-2} t \dd t = O(n^{-2}),
\end{split}
\]
where, here and later on, $c, C$ are strictly positive generic constant, whose exact 
value is not important and can be different at each appearance.
We also used that $t_2$ can be chosen sufficiently small to guarantee that $ct_2<1$. Furthermore, the variables $u_1$ and $u_2$ appear only in the 
$|u_1 \times  u_2 |$ term, thus  the inner double integral can be evaluated 
explicitly.
In summary, the integral on $B_2$ is negligible.

\smallskip

Next we deal with the part of the integral in \eqref{eq:restricted} on $B_1$. We have that
\[
\begin{split}
& \int \!\! \int_{B_1} 
\left (1-\frac{A_-(x_1, x_2)}{A}\right )^{n-2}
{\bf 1}(A_-(x_1, x_2)<\delta) \dx x_1 \dx x_2  \\
& = \int_{N_1} \dd u \int_0^{t_0(u)} 
\left( 1 - \frac{A(u,t)}{A} \right)^{n-2} (1+t) ( \ell(u,t) - \sin \ell(u,t) ) \dd t.
\end{split}
\]
Replacing $t_0(u)$ with $t_1$ we lose a negligible part of the integral, as before.

We split the integral further according to the vertices. Fix $\varepsilon > 0$
small enough. 
If $\beta \in [\alpha_i + \varepsilon, \beta_i - \varepsilon]$, 
$i = 0, 1 \ldots, k-1$, then by 
\eqref{eq:l-asy-betanagy} it follows that
\[
 \ell(\beta,t) \sim t ( \cot (\beta - \alpha_i) + \cot (\beta_i - \beta)).
\]
Thus, by Lemma \ref{Vaztszerell},
$A(\beta,t) \geq c t^2$ 
uniformly in $\beta \in [\alpha_i + \varepsilon, \beta_i - \varepsilon]$.
Therefore, for a fixed $\varepsilon>0$, for each $i = 0,1,\ldots, k-1$, it holds that 
\begin{align}
\nonumber & \int_{\alpha_i + \varepsilon}^{\beta_i - \varepsilon} \dd \beta 
\int_0^{t_1} \left( 1 - \frac{A(\beta,t)}{A} \right)^{n-2} (1+t) 
( \ell(\beta,t) - \sin \ell(\beta,t) ) \dd t \\
\nonumber & \leq C \int_0^{t_1} ( 1 - c t^2)^n t^3 \dd t = O(n^{-2}).
\end{align}
Therefore, the main contribution of the integral \eqref{eq:restricted} comes from the corners.

\smallskip

For simplicity, choose the vertex $v_0$ and assume that $\alpha_0 = 0$.
We determine the contribution of the integral on $\beta \in (0,\varepsilon)$. 
Introduce the notation
$$I=\int_0^\varepsilon \dx \beta \int_0^{t_1} \left (1- \frac{A(\beta,t)}{A} \right)^{n-2}(1+t)(\ell(\beta,t)-\sin \ell(\beta,t)) \dx t.$$
Let $\delta > 0$ be a fixed small number, to be determined later.
We split $I$ as follows
\begin{align} \label{eq:I12}
I_1 & = 
\int_0^\varepsilon \dd \beta \int_{\delta \beta}^{t_1}
\left( 1 - \frac{A(\beta, t)}{A} \right)^{n-2} (1+t) (\ell - \sin \ell) \, \dd t ,\\
I_2 & = 
\int_0^\varepsilon \dd \beta \int_0^{\delta \beta}
\left( 1 - \frac{A(\beta, t)}{A} \right)^{n-2} (1+t) (\ell - \sin \ell) \, \dd t.
\end{align}
First we show that  $I_1$ is negligible for any $\delta > 0$ and $\varepsilon > 0$.

To simplify notation, 
put $\ell_1=\ell_1(\beta, t)$ and $\ell_2=\ell-\ell_1$ (as in Lemma \ref{lemma:1stasmyptotics}), 
and let $A_i$ be the area corresponding to $\ell_i$, $i=1,2$ (see Figure~\ref{fig:A1A2}).

We note that $\ell_2$ is small, since it follows from \eqref{eq:l-asy-betanagy} that
\begin{equation} \label{eq:ell2}
\ell_2(\beta, t) \sim t \cot (\beta_0 - \beta) \quad \text{as }  t \to 0+,
\end{equation}
uniformly in $\beta \leq \varepsilon$.

\begin{figure}[h]
    \centering
    \includegraphics[scale=.3]{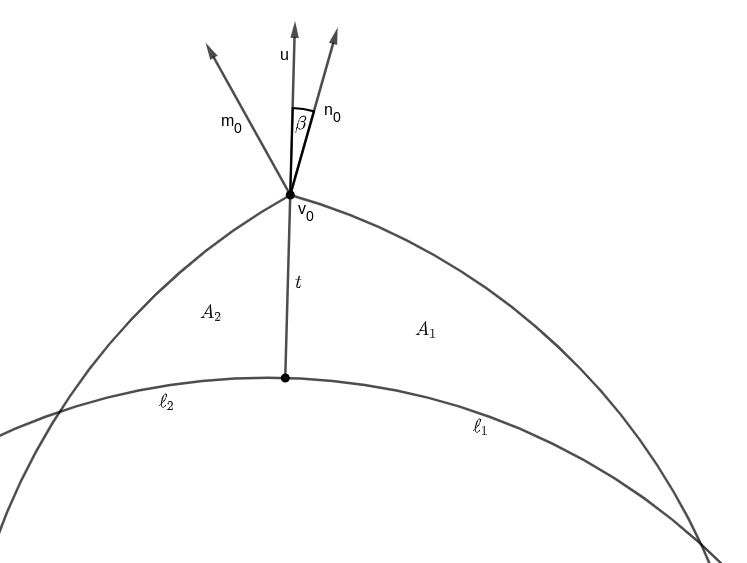}
    \caption{}
    \label{fig:A1A2}
\end{figure}

Now assume that $t > \delta \beta$ and $\ell_1<\delta/2$. Then
\[
\frac{\sin \ell_1 \sin \beta}{t} <\frac{\frac{\delta}{2} \beta}{\delta \beta}=\frac 12,
\]
which contradicts Lemma~\ref{lemma:1stasmyptotics} if $\delta$ is sufficiently small. 
Therefore
\begin{equation} \label{eq:l-nagy}
\ell_1(\beta, t) \geq \delta / 2, \quad \text{if } t \geq \delta \beta.
\end{equation}

Also, if $t \geq \delta\beta$, then by Lemma \ref{Vaztszerell} and \eqref{eq:l-nagy}
\begin{equation} \label{eq:A1-asy2}
ct\leq A_1(\beta, t). 
\end{equation}
Similarly, Lemma \ref{Vaztszerell} and \eqref{eq:ell2} yield that 
\begin{equation} \label{eq:A2-asy}
c t^2 \leq A_2(\beta, t) \leq C t^2.
\end{equation}
Now, by \eqref{eq:A1-asy2} and \eqref{eq:A2-asy} we obtain that
\[
\begin{split}
I_1 & \leq 2 \int_0^\varepsilon \dd \beta \int_{\delta \beta}^{t_1}
\left( 1 - \frac{A(\beta, t)}{A} \right)^{n-2} \dd t \\
& \leq 2 \int_0^{t_1} \frac{t}{\delta} \left( 1 - c t 
\right)^{n-2} \dd t = O(n^{-2}),
\end{split}
\]
 which proves that $I_1$ is negligible.

Finally, we estimate $I_2$, which carries the weight of the integral in \eqref{dabjuen}. Let $\varepsilon_1 > 0$ be fixed. We apply \eqref{eq:ell2} and Lemmas \ref{lemma:1stasmyptotics} and \ref{lemma:A-area},  and  we choose $\delta > 0$ and $\varepsilon > 0$ 
small enough such that 
\[
\begin{split}
& (1 - \varepsilon_1) \frac{t^3}{6} \leq t - \sin t \leq 
(1 + \varepsilon_1) \frac{t^3}{6}, \quad t \in [0,\delta], \\
& (1 - \varepsilon_1) \frac{t}{\beta} \leq \ell(\beta, t) \leq 
(1 + \varepsilon_1) \frac{t}{\beta}, \quad t/\delta \leq  \beta \leq  \varepsilon \\
& (1 - \varepsilon_1) \frac{t^2}{2 \beta} \leq A(\beta, t) \leq 
(1 + \varepsilon_1) \frac{t^2}{2 \beta}, \quad t/\delta \leq  \beta \leq  \varepsilon.
\end{split}
\]
Substituting $y = t^2 (1-\varepsilon_1) / ( 2A\beta) =: d_1 t^2/\beta$ and
changing the order of integration yield
\[
\begin{split}
I_2 & \leq \int_0^\varepsilon \dd \beta \int_0^{\delta \beta} 
\left( 1 - \frac{(1- \varepsilon_1) t^2}{2 \beta A} \right)^{n-2}
\frac{t^3 (1+\varepsilon_1)^3}{6 \beta^3} ( 1 + \delta \varepsilon) \dd t \\
& = \frac{( 1 + \delta \varepsilon)(1 + \varepsilon_1)^3}{12 \, d_1^2} 
\int_0^\varepsilon \dd \beta 
\int_0^{\delta^2 d_1 \beta} ( 1- y)^{n-2} \frac{y}{\beta} \dd y \\
& = 
\frac{( 1 + \delta \varepsilon) (1 + \varepsilon_1)^3}{12 \, d_1^2 \,  n^2}
\int_0^{n\delta^2 \varepsilon d_1} \left( 1 - \frac{x}{n} \right)^{n-2}
x \left( \ln (\varepsilon \delta^2 d_1 n) - \ln x \right) \dd x \\
& \sim 
\frac{\ln n \, ( 1 + \delta \varepsilon)(1 + \varepsilon_1)^3}{12 \, d_1^2 \, n^2}\quad  \text{as } n\to\infty. 
\end{split}
\]
Since $\varepsilon_1 > 0$ is arbitrary, and the lower bound can be obtained by an analogous argument, we have obtained that
\begin{equation} \label{eq:I2-asy}
I_2 \sim 
\frac{A^2 \ln n }{3  n^2}  \quad \text{as } n\to \infty.
\end{equation}
Since at each vertex we have twice the contribution of $I_2$, the statement follows
when $f_0(P)\ge 3$.

To finish the proof we need to deal with the case in which $f_0(P)=2$. By Lemma~\ref{2vertices}, if both $A_-(x_1,x_2)$ and $A_+(x_1,x_2)$ are small, then $\ell$ is larger than an absolute constant, and this part of the integral can be estimated similarly to $I_1$. The rest of the argument remains valid in this case as well.
 
\section*{Acknowledgements}
F. Fodor was partially supported by Hungarian NKFIH grant K134814. 
P. Kevei was partially supported by the J\'{a}nos Bolyai Research 
Scholarship of the Hungarian Academy of Sciences.
V. V\'\i gh was partially supported by  Hungarian NKFIH grant FK135392. 
This research was supported by grant NKFIH-1279-2/2020 of the Ministry for Innovation and Technology, Hungary.

\begin{bibdiv}
\begin{biblist}

\bib{B89}{article}{
   author={B\'{a}r\'{a}ny, Imre},
   title={Intrinsic volumes and $f$-vectors of random polytopes},
   journal={Math. Ann.},
   volume={285},
   date={1989},
   number={4},
   pages={671--699},
   issn={0025-5831},
}

	\bib{B08}{article}{
			author = {B\'{a}r\'{a}ny, Imre},
			year = {2008},
			pages = {339-365},
			title = {Random points and lattice points in convex bodies},
			volume = {45},
			journal = {Bulletin of The American Mathematical Society},
		}

\bib{BL88}{article}{
   author={B\'{a}r\'{a}ny, I.},
   author={Larman, D. G.},
   title={Convex bodies, economic cap coverings, random polytopes},
   journal={Mathematika},
   volume={35},
   date={1988},
   number={2},
   pages={274--291},
   issn={0025-5793},
}

\bib{BLNP07}{article}{
   author={K. Bezdek},
   author={Z. L{\'a}ngi},
   author={M. Nasz{\'o}di},
   author={P. Papez},
   title={Ball-polyhedra},
   journal={Discrete Comput. Geom.},
   volume={38},
   date={2007},
   number={2},
   pages={201--230},
}


\bib{F19}{article}{
author={F. Fodor},
title={Random ball polytopes in smooth convex bodies},
journal={arXiv:1906.11480},
}

\bib{FKV14}{article}{
	author={Fodor, F.},
	author={Kevei, P.},
	author={V\'\i gh, V.},
	title={On random disc polygons in smooth convex discs},
	journal={Adv. in Appl. Probab.},
	volume={46},
	date={2014},
	number={4},
	pages={899--918},
	issn={0001-8678},
}

\bib{M1935}{article}{
	author={Mayer, Anton E.},
	title={Eine \"Uberkonvexit\"at},
	journal={Math. Z.},
	volume={39},
	date={1935},
	number={1},
	pages={511--531},
	issn={0025-5874},
}

\bib{R03}{article}{
	author = {Reitzner, Matthias},
	year = {2003},
	pages = {2136--2166},
	title = {Random polytopes and the Efron--Stein jackknife inequality},
	volume = {31},
	journal = {Ann. Probab.},
	}

\bib{RS63}{article}{
   author={A. R\'enyi},
   author={R. Sulanke},
title={\"Uber die konvexe H\"ulle von n zuf\"allig gew\"ahlten Punkten},
journal={Z. Wahrscheinlichkeitsth. verw. Geb.},
volume={2},
date={1963},
pages={75--84.},
}

\bib{RS64}{article}{
   author={A. R\'enyi},
   author={R. Sulanke},
 title={\"Uber die konvexe H\"ulle von n zuf\"allig gew\"ahlten Punkten, II.},
 journal={Z. Wahrscheinlichkeitsth. verw. Geb.},
volume={3},
date={1964},
pages={138--147.},
}

\bib{Santalo}{article}{
	author={Santal\'o, L. A.},
	title={On plane hyperconvex figures},
	journal={Summa Brasil. Math.},
	volume={1},
	date={1946},
	pages={221--239},
}

\bib{Sch14}{book}{
	author={Schneider, Rolf},
	title={Convex bodies: the Brunn-Minkowski theory},
	series={Encyclopedia of Mathematics and its Applications},
	volume={151},
	edition={Second expanded edition},
	publisher={Cambridge University Press, Cambridge},
	date={2014},
	pages={xxii+736},
	isbn={978-1-107-60101-7},
}

\bib{Sch17}{article}{
	author={Schneider, Rolf},
	title={Discrete aspects of stochastic geometry},
	conference={
		title={Handbook of Discrete and Computational Geometry, 3rd ed.},
	},
	book={
		publisher={CRC Press},
		place={Boca Raton},
		date={2017},
		pages={299--329},
	},	
}



\end{biblist}
\end{bibdiv}

\end{document}